\newtheorem{theorem}{Theorem}
\newtheorem{conjecture}{Conjecture}
\newtheorem{lemma}[theorem]{Lemma}
\newtheorem{corollary}[theorem]{Corollary}
\DeclareMathOperator{\incg}{{Inc}}
\newcommand{\NN}{{\mathbb N}}
\newcommand{\ZZ}{{\mathbb Z}}
\begin{document}

%\title{Maximal chains meeting every maximal chains in interval orders}
\title{The Aharoni--Korman conjecture for posets whose incomparability graph is locally finite}

%\author[I.Zaguia]{Imed Zaguia*}\thanks{*The author is supported by  Canadian Defence Academy Research Program and LABEX MILYON (ANR-10-LABX-0070) of Universit\'e de Lyon within the program ''Investissements d'Avenir (ANR-11-IDEX-0007'' operated by the French National Research Agency (ANR)}
%\address{Department of Mathematics \& Computer Science, Royal Military College of Canada,
%P.O.Box 17000, Station Forces, Kingston, Ontario, Canada K7K 7B4}
%\email{zaguia@rmc.ca}

\author[I.Zaguia]{Imed Zaguia*}\thanks{*The author is supported by  Canadian Defence Academy Research Program}
\address{Department of Mathematics \& Computer Science, Royal Military College of Canada,
P.O.Box 17000, Station Forces, Kingston, Ontario, Canada K7K 7B4}
\email{zaguia@rmc.ca}

\date{\today}

\keywords{(partially) ordered set; chain; antichain; Aharoni Korman conjecture; locally finite; semiorder}
\subjclass[2010]{06A6, 06F15}

\begin{abstract}Aharoni and Korman (Order 9 (1992) 245--253) have conjectured that every ordered set without
infinite antichains possesses a chain and a partition into antichains so that each part intersects
the chain. The conjecture is verified for posets whose incomparability graph is locally finite. It follows that the conjecture is true for $(3 + 1)$-free  posets with no infinite antichains.
\end{abstract}

\maketitle

\section{Introduction and presentation of the results}

In \cite{aharoni-korman}, Aharoni and Korman proposed the following conjecture.

%\noindent \textbf{Conjecture.} Given an ordered set $P$ with no infinite antichains, there is a partition of $P$
%into antichains and there is a chain of $P$ that intersects every member of the partition.\\

\begin{conjecture} For an ordered set $P$  with no infinite antichains and any positive integer $k$, there are $k$ chains $C_1,\ldots, C_k$ and a partition of $P$ into antichains $(A_i : i \in I)$ such that each $A_i$ intersects $\min(|A_i|,k )$ chains $C_j$.
\end{conjecture}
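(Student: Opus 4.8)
The plan is to attack the conjecture in full generality by first settling the finite case and then lifting it to an arbitrary $P$ with no infinite antichain. The hypothesis that $P$ has no infinite antichain is what makes the statement even well posed: it forces every antichain class $A_i$ to be finite, so that $\min(|A_i|,k)$ is always a genuine finite number. For a \emph{finite} poset the assertion is classical. When $k=1$ it is Mirsky's duality: the partition of $P$ into its height levels is a partition into antichains, and a longest chain meets each level, so a single chain meets every class. For general $k$ this is the saturated-partition form of the Greene--Kleitman theory, which yields $k$ chains together with an antichain partition in which each class $A_i$ meets exactly $\min(|A_i|,k)$ of the chains. Thus the entire difficulty is transferred to the passage from finite approximations to a single global family $C_1,\ldots,C_k$ and a global antichain partition on all of $P$.

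First I would try a compactness framework. To each ordered pair of elements I attach a propositional variable recording whether they lie in a common antichain class, and to each element $x$ and index $j\le k$ a variable recording whether $x\in C_j$; the requirements (each class is an antichain, the classes partition $P$, each $C_j$ is a chain, and each class meets the prescribed number of chains) are then expressed as clauses. By the compactness theorem for propositional logic, a global model exists provided every finite set of clauses is satisfiable, and one would hope to obtain the latter by applying the finite case to the finite sub-poset on which those clauses are supported. As a variant, since the antichains are finite I would alternatively attempt a transfinite construction, ranking the elements by a Mirsky-type height (on each well-founded portion) and building the chains level by level, committing at each stage to only finitely many new elements while maintaining the invariant that every antichain class already closed off meets the correct number of chains.

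The hard part --- and the reason the statement is still only a conjecture --- is that the meeting condition is irreducibly \emph{global}. Whether a class $A_i$ meets its required $\min(|A_i|,k)$ chains depends on the whole of $A_i$, and although $A_i$ is finite it need not lie in any prescribed bounded neighbourhood of a given element; consequently a solution on a finite sub-poset $F$ may fail to satisfy even the clauses supported on $F$, because the class of an element of $F$ can acquire new members outside $F$. This is exactly what breaks the naive compactness argument, and the same difficulty reappears in the transfinite approach at limit stages and along two-way-infinite chains (order type $\ZZ$), where one must guarantee that the finitely many chains built so far can still be continued so as to pass through every antichain class that will later be created. Securing this coherence between one fixed family of chains and an as-yet-undetermined antichain partition is the crux: no purely local or finitary device seems to force it, so the decisive step is to find a structural reason why the finite solutions can be chosen compatibly --- or to recognise that no such reason exists.
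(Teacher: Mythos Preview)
The statement you are addressing is presented in the paper as an open \emph{conjecture} of Aharoni and Korman; the paper contains no proof of it. What the paper actually proves is the instance $k=1$ under the additional hypothesis that the incomparability graph of $P$ is locally finite (Theorem~\ref{thm:locallyfinite}), by reducing to the case of a connected locally finite incomparability graph, showing that in that case every chain embeds in $\ZZ$ (equivalently, $P$ has no infinite intervals), and then invoking the Duffus--Goddard result (Theorem~\ref{thm:conj-finite-int}).

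Your proposal is not a proof either, and you candidly say as much: you outline a compactness scheme and a transfinite construction, then correctly identify the obstruction --- that the intersection requirement $|A_i\cap\{C_1,\ldots,C_k\}|\ge\min(|A_i|,k)$ is global and does not localise to finite subposets, so neither compactness nor a level-by-level build is forced to cohere. That diagnosis is accurate and is precisely why the conjecture is open in general, but it leaves you with no argument. In particular, your remark that the finite case ``is the saturated-partition form of the Greene--Kleitman theory'' is the easy part; the genuine content of the conjecture is exactly the infinite lift you were unable to carry out. If your aim was to prove the full conjecture, there is a hard gap at the coherence step and nothing in your sketch closes it; if your aim was to match the paper, you should instead be proving the $k=1$ case under the locally-finite-incomparability hypothesis, where the paper's route via Lemma~\ref{lem:omeg+1} and Theorem~\ref{thm:conj-finite-int} does succeed.
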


This conjecture is dual to a theorem which Aharoni and Korman refer to as the "'correct' infinite version" of the well-known theorem of Greene and Kleitman on Sperner $k$-families \cite{greene-kleitman}.

The instance $k=1$ of the conjecture was proven to be true for well-founded ordered set with every level finite  \cite{aharoni-korman}. This follows from the Compactness Theorem of First Order Logic. Duffus and Goddard gave a constructive proof in \cite{duffs-goddard}. Conjecture 1 also holds in the case of posets of width two (i.e., with no antichain of cardinality $3$). Aharoni and Korman \cite{aharoni-korman} obtain this result via application of the following fundamental theorem.

\begin{theorem}(K\"{o}nig duality theorem).
Every bipartite graph $G$ contains a cover $D$ and a matching $F$ so that $D$ consists of precisely one vertex from each edge of $F$.
\end{theorem}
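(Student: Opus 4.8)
The statement is K\"onig's duality theorem; for finite $G$ it is classical, while for infinite $G$ it is a theorem of Aharoni, and the latter is the version that matters here. My plan is to carry out the finite argument, notice that it uses nothing about finiteness beyond a single existence statement about matchings, and then isolate that statement as the entire content of the infinite case.

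For finite $G$ with sides $A$ and $B$ I would take a matching $F$ of maximum cardinality, let $S$ be the set of $F$-unsaturated vertices of $A$, and let $T$ be the set of all vertices lying on some $F$-alternating path starting in $S$ with a non-matching edge. Since $F$ has no augmenting path, $T$ contains no $F$-unsaturated vertex of $B$; consequently every matched vertex of $T$ has its $F$-partner again in $T$; and every vertex of $A\setminus T$, being outside $S$, is $F$-saturated, so its $F$-partner must then lie in $B\setminus T$. Setting $D=(A\setminus T)\cup(T\cap B)$, one checks from these facts that every vertex of $D$ lies on an $F$-edge, that every $F$-edge has exactly one end in $D$, and that $D$ is a cover: an edge with neither end in $D$ would run from $A\cap T$ to $B\setminus T$, would therefore not lie in $F$, and would extend an alternating path out of $S$, contradicting the definition of $T$. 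Thus $(F,D)$ is the desired pair, and I would write out precisely these verifications.

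The crucial observation is that the argument above used only that $F$ has no finite augmenting path, not that it has maximum cardinality. Hence for an \emph{arbitrary} bipartite graph the theorem reduces to the single assertion that every bipartite graph carries a matching with no finite augmenting path, after which the construction of $D$ goes through word for word (a one-way-infinite alternating ray, or a two-way-infinite alternating path, simply feeds saturated vertices into $T$ and causes no harm). Producing such a matching is the genuine obstacle: naively iterating ``augment along an augmenting path'' need not terminate, because the cardinality of the matching never increases, so one is forced into a transfinite construction, and the delicate point is forming the correct ``limit matching'' at limit stages and showing that it again admits no augmenting path — this is the heart of Aharoni's proof, which I would invoke rather than redo. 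For the case of interest here one can avoid all of this: when $G$ is locally finite, introduce a propositional variable ``$v\in D$'' for each vertex $v$ and a variable ``$e\in F$'' for each edge $e$, and write the matching, cover, and ``exactly one end of each edge of $F$'' requirements as axioms, all of which are finitary by local finiteness; every finite set of axioms is then satisfiable by the finite case, and the Compactness Theorem of First Order Logic yields $F$ and $D$ for all of $G$ — the same tool already used above for the well-founded instance of $k=1$.
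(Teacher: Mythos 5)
The paper does not prove this statement at all: it is quoted as a known result, with the finite case attributed to K\"onig \cite{konig1950} and the general case to Aharoni \cite{aharoni84}, and it is used only as background for how Aharoni and Korman handled width-two posets. Your write-up of the finite case is the standard and correct alternating-path argument, and your key observation is also correct: the construction of $D=(A\setminus T)\cup(T\cap B)$ uses only that $F$ admits no finite augmenting path, and each verification (no unsaturated $B$-vertex enters $T$, $F$-partners of vertices of $T$ stay in $T$, each $F$-edge meets $D$ exactly once, uncovered edges would extend an alternating path) goes through verbatim for infinite $G$. So you have correctly reduced the full theorem to the existence, in an arbitrary bipartite graph, of a matching with no finite augmenting path --- but be clear that this reduction does not shorten the distance to the result: that existence statement is essentially equivalent to the theorem itself (a K\"onig pair $(F,D)$ forces $F$ to have no augmenting path, by propagating membership in $D$ along the path), and it is precisely the deep content of Aharoni's paper, which you, like the paper, ultimately cite rather than prove. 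Your compactness argument is sound as far as it goes --- for locally finite $G$ all of the matching, cover, exactly-one-end, and ``every vertex of $D$ lies on an $F$-edge'' conditions are finitary, and finite subsets of the axioms are satisfied by applying the finite theorem to a finite subgraph containing the mentioned vertices and edges --- but note that it does not establish the theorem in the generality in which it is stated or in which Aharoni and Korman apply it: the bipartite graph arising from a width-two poset need not be locally finite, since an element of such a poset can be incomparable to an infinite chain. In short: everything you wrote is correct, you supply genuine proofs where the paper supplies only citations, and at the one point where a citation is unavoidable you cite exactly what the paper cites.
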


K\"{o}nig proved this theorem in the finite case \cite{konig1950}, while Aharoni \cite{aharoni84} obtained the full generality.

%Aharoni and Korman used K\"{o}nig duality theorem to prove the following result.
%
%\begin{theorem}(Aharoni and Korman \cite{aharoni-korman}). If $P$ has width two, then there is a chain in $P$ that intersects every level of $P$.
%\end{theorem}

Two other instances where the instance $k=1$ of the conjecture is true are due to Duffus and Goddard \cite{duffs-goddard}:

\begin{theorem}[\cite{duffs-goddard}] Let $C$ be a chain and let $P$ be an ordered set of finite hight. Then there is a partition of the direct product $C \times P$ into antichains and there is a chain of $C \times P$ that intersects every member of the partition.
\end{theorem}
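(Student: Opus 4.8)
The plan is to reduce the statement to the case where $P$ is a finite chain, and then to treat that case by an explicit construction.

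\textbf{Reduction to $P$ a finite chain.} Let $h$ be the height of $P$ and $\rho\colon P\to\{0,1,\dots,h-1\}$ the rank function, so that $\rho$ is onto and $p<p'$ implies $\rho(p)<\rho(p')$. Write $\mathbf h$ for the chain $\{0<1<\dots<h-1\}$ and let $\pi\colon C\times P\to C\times\mathbf h$ be the order preserving surjection given by $\pi(c,p)=(c,\rho(p))$. Suppose we have a partition of $C\times\mathbf h$ into antichains $(A_j)$ and a chain $E$ meeting every $A_j$. Then each $\pi^{-1}(A_j)$ is an antichain of $C\times P$: if $(c,p)<(c',p')$ were two comparable points of $\pi^{-1}(A_j)$, then $(c,\rho(p))\le(c',\rho(p'))$ would be comparable points of the antichain $A_j$, whence $c=c'$ and $\rho(p)=\rho(p')$ and therefore $p=p'$, a contradiction. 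As $\pi$ is a total surjection, these antichains partition $C\times P$. For the transversal, let $S\subseteq\{0,\dots,h-1\}$ be the set of second coordinates appearing in $E$; choosing $x\in P$ with $\rho(x)=\max S$ and reading off ranks along a longest chain of $P$ with top $x$, one obtains elements $q_s\in P$ $(s\in S)$ with $\rho(q_s)=s$ and $q_s<q_{s'}$ for $s<s'$. Then $\widetilde E=\{(c,q_s):(c,s)\in E\}$ is a chain of $C\times P$ (comparable pairs of $E$ with equal second coordinate give equal $q$'s and comparable first coordinates, while those with $s<s'$ give $q_s<q_{s'}$ and $c\le c'$), it projects onto $E$ under $\pi$, and hence meets every $\pi^{-1}(A_j)$. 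So it suffices to prove the theorem for $P=\mathbf h$.

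\textbf{The poset $C\times\mathbf h$.} This poset has width at most $h$, since an antichain meets each row $C\times\{i\}$ in at most one point and these points have strictly decreasing first coordinate as $i$ increases. One wants to tile $C\times\mathbf h$ by ``staircase'' antichains $\{(c_0,0),(c_1,1),\dots,(c_{h-1},h-1)\}$ with $c_0>c_1>\dots>c_{h-1}$, and to take the transversal chain inside a single row. If $C$ has neither a least nor a greatest element (for instance $C=\ZZ$ or $C=\RR$), fix a bijection $\sigma\colon C\to C$ with $\sigma(c)<c$ for every $c$ — such $\sigma$ exists because $C$ has no endpoints — let the part of $(c,h-1)$ be $\{(c,h-1),(\sigma c,h-2),(\sigma^2c,h-3),\dots,(\sigma^{h-1}c,0)\}$, and take $D=C\times\{h-1\}$; a direct check shows the parts form a partition into antichains each met by $D$. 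If $C$ has a least element then $\{(\min C,0)\}$, and if $C$ has a greatest element then $\{(\max C,h-1)\}$, is comparable to everything, hence a forced singleton part; in those cases one instead uses a maximal chain of $C\times\mathbf h$ that runs along the bottom row, climbs one column, and runs along the top row, and defines the part of each of its points to be a maximal staircase hanging from that point, the intervening rows being filled by a suitable (no longer bijective) shift of $C$.

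\textbf{Expected main obstacle.} The reduction in the first step is the one conceptual ingredient; everything else is the construction for $C\times\mathbf h$, carried out uniformly over all chains $C$. The real work is the case analysis by whether $C$ has a top and/or a bottom element, the choice in each case of the shift(s) of $C$ and of the maximal transversal chain (which must absorb the at most two forced corner singletons), and the verification that the resulting staircases are pairwise disjoint, exhaust $C\times\mathbf h$, and are each met by the chain. Chains with ``limit corners'', such as $\{0\}\cup\{1/n:n\ge1\}$, are not special but must be handled inside this same analysis. I expect this bookkeeping, rather than any single difficult idea, to be the bulk of the proof.
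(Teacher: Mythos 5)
The paper states this result without proof (it is simply quoted from Duffus and Goddard), so there is no internal argument to compare against; judged on its own merits, your reduction to $C\times\mathbf{h}$ via the rank function is correct and complete, but the construction for $C\times\mathbf{h}$ has a genuine gap. First, a fixable sign error: if $\sigma(c)<c$, then $(\sigma c,h-2)<(c,h-1)$ in the product order, so the displayed part $\{(c,h-1),(\sigma c,h-2),\dots,(\sigma^{h-1}c,0)\}$ is a chain, not an antichain; a staircase antichain through a top-row point must move \emph{up} in $C$ as it moves down in $\mathbf{h}$, so you need $\sigma(c)>c$ there. More seriously, the assertion that a bijection $\sigma\colon C\to C$ with $\sigma(c)<c$ ``exists because $C$ has no endpoints'' is false. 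Take $C=\omega^{*}+\omega_{1}$: it has no least and no greatest element, yet any map with $\sigma(c)<c$ sends all but countably many points of the $\omega_{1}$-part to strictly smaller points of that same part, and by Fodor's pressing-down lemma a regressive function on a stationary subset of $\omega_{1}$ is constant on a stationary set, so $\sigma$ cannot even be injective, let alone bijective. (Dually, $\omega_{1}^{*}+\omega$ admits no injective $\sigma$ with $\sigma(c)>c$.) Hence the dichotomy ``$C$ has or lacks endpoints'' is not the right case split, and the central existence claim of your main case fails.

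The theorem is still true for such chains, but rescuing the construction needs a different idea: drop bijectivity, take the transversal to be an entire extreme row, say $D=C\times\{0\}$, and use an \emph{injective} $\phi$ with $\phi(c)>c$, assigning $(c,j)$ to the part of $(\phi^{j}(c),0)$; bottom-row points not hit by any $\phi^{j}$ become singleton parts, which is harmless because they already lie on $D$. For $\omega^{*}+\omega_{1}$ such a progressive injection does exist (shift the $\omega^{*}$-part upward and send its top into the $\omega_{1}$-part), even though the regressive one does not. But you would then have to prove that every chain admits a suitable injection in at least one direction, and handle chains like $[0,1]$ where neither global shift exists and the transversal must turn a corner (bottom row plus top column, say). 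None of this is in the proposal, and it is exactly where the content of the Duffus--Goddard argument lies; what you set aside as bookkeeping is the actual mathematical difficulty.
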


\begin{theorem}[\cite{duffs-goddard}]\label{thm:conj-finite-int} If $P$ is an ordered set with no infinite intervals and no infinite antichains
then there is a partition of $P$ into antichains and there is a chain of $P$ that intersects every member of the partition.
\end{theorem}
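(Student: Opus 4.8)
The first move is to observe that the conclusion is equivalent to the following: there is a chain $C$ of $P$ and a map $\phi\colon P\to C$ with $\phi|_{C}=\mathrm{id}$ all of whose fibres $\phi^{-1}(c)$ are antichains. Indeed, given such a $\phi$, the fibres partition $P$, each fibre meets $C$ exactly in $c$ (if $c'\in C$ and $\phi(c')=c$ then $c'=\phi(c')=c$), and each fibre is automatically \emph{finite} because $P$ has no infinite antichain; conversely a partition into antichains with a transversal chain $C$ yields such a $\phi$ by sending every point of a part to its unique representative in $C$. So it suffices to produce a pair $(C,\phi)$. I would also record at the outset that $P$ is \emph{countable}: a linear order all of whose intervals $[a,b]$ are finite embeds into $\mathbb{Z}$ (map $x$ to $\pm(|[p,x]|-1)$ from a fixed base point $p$), hence is countable; so an uncountable chain has an infinite interval, and by the Erd\H{o}s--Dushnik--Miller relation $\aleph_{1}\to(\aleph_{1},\omega)^{2}$ (colour pairs by comparable/incomparable), $|P|\ge\aleph_{1}$ would force either an uncountable chain, hence an infinite interval, or an infinite antichain. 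The same remark shows every chain of $P$, having all intervals finite, is order-isomorphic to a finite chain, $\omega$, $\omega^{*}$, or $\mathbb{Z}$.

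\textbf{The well-founded case (the model).} Suppose $P$ has no infinite descending chain, and set $r(x)=\sup\{r(y)+1:y<x\}$. Then $r(x)<\omega$ for all $x$: if $r(x)\ge\omega$ there are chains of every finite length inside $\downarrow x$, which is infinite and has no infinite antichain, so by the infinite Ramsey theorem $\downarrow x$ contains an infinite chain; that chain has all intervals finite, so it is of type $\omega$, $\omega^{*}$, or $\mathbb{Z}$; an $\omega$- or $\mathbb{Z}$-chain below $x$ makes $[d,x]$ infinite, and an $\omega^{*}$-chain contradicts well-foundedness. Hence $r\colon P\to\omega$ is a strict order homomorphism whose image is an initial segment of $\omega$. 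If that image is finite, $P$ has finite height $h$, and the Mirsky levels $A_{k}=r^{-1}(k)$ together with a maximal chain below a point of $A_{h-1}$ do the job. If the image is all of $\omega$, take $A_{k}=r^{-1}(k)$ (finite antichains, since $P$ has no infinite antichain) and build the transversal chain by K\"onig's lemma: the tree whose depth-$k$ nodes are the chains $y_{0}<\dots<y_{k}$ with $y_{i}\in A_{i}$ is finitely branching and, since every rank is attained and a point of rank $k$ sits atop such a chain, infinite; an infinite branch is a chain meeting every level exactly once.

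\textbf{The general case.} Now I would fix a maximal chain $C$, chosen with care: when $P$ is not well-founded I would take $C$ to contain an infinite descending chain, so that (as its order type is then $\omega^{*}$ or $\mathbb{Z}$) every element of $C$ has infinitely many elements of $C$ below it. Put $\phi|_{C}=\mathrm{id}$. What remains is to colour $P\setminus C$ from the colour set $C$ so that colour classes are antichains and no $x$ receives a colour comparable to $x$ — a list-colouring of the (perfect) comparability graph of $P\setminus C$ with lists $\mathrm{Av}(x)=\{c\in C:c\parallel x\}$. Maximality of $C$ forces $\mathrm{Av}(x)\ne\varnothing$ for every $x$ (otherwise $C\cup\{x\}$ is a chain), and $\mathrm{Av}(x)$ is \emph{infinite} except for the ``short-list'' elements — those lying strictly between two elements of $C$, and (when $C\cong\omega^{*}$) those sitting above some element of $C$ but off to the side near the top; in either subcase $x$ lies in a finite interval of $P$, or is squeezed against a finite stretch of $C$, and $\mathrm{Av}(x)$ is a nonempty finite interval of the linear order $C$. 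The elements with infinite lists can be coloured greedily along an enumeration: at each step only finitely many neighbours have been coloured, so a free colour remains.

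\textbf{The obstacle.} The heart of the matter is the short-list elements. Along any chain of them the parameters $a(x)=\sup\{m:c_{m}<x\}$ and $b(x)=\inf\{m:c_{m}>x\}$ are monotone, so the lists $\mathrm{Av}(x)$ are intervals of $C$ arranged compatibly with the chain, and one must show a system of distinct representatives exists stretch by stretch — this is where ``no infinite interval'' supplies the finiteness used for the counting and ``no infinite antichain'' bounds how many short-list elements compete over the same stretch of $C$ (and is what, on the good choice of $C$, keeps the short-list set from containing a clique too large for the available colours — the kind of configuration that must be excluded, and that forces the careful choice of $C$). The real difficulty is that this forced colouring is not independent of the greedy step: a short-list element can be comparable to an infinite-list element, so the greedy phase must \emph{reserve} the scarce colours. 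I expect the proof to run as a single recursion over a well-chosen enumeration of $P\setminus C$ — processing short-list elements, and the infinite-list elements that threaten them, in the right order — and the crux is to verify that this recursion never gets stuck, leaning throughout on ``no infinite interval'' to keep the relevant intervals finite and on ``no infinite antichain'' both to keep colour classes finite and to limit the competition for colours. That verification is, I believe, exactly where the argument of Duffus and Goddard does its work.
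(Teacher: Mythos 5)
First, a point of reference: the paper does not prove this statement at all — it is Duffus and Goddard's theorem, quoted with a citation and used as a black box in the proof of Theorem \ref{thm:locallyfinite}. So there is no in-paper proof to compare against, and your proposal has to stand on its own.

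It does not, and you essentially say so yourself. The reformulation, the countability remark, and the well-founded/finite-rank case via K\"onig's lemma are fine (modulo a slip: in the finite-height subcase an arbitrary maximal chain through a point of maximal rank need not meet every Mirsky level — take $d<c$ maximal in a poset where $c$ also sits above a longer chain $a<b<c$; you must instead descend rank by rank, exactly as your tree argument does). But the general case is where the theorem lives, and there your text is a description of the difficulty rather than an argument: you name the short-list elements, observe that their lists are finite intervals of $C$, and then write that the crux is to verify that a certain recursion never gets stuck and that "that verification is, I believe, exactly where the argument of Duffus and Goddard does its work." That is the entire content of the theorem being deferred. Concretely, for a fixed maximal chain $C$ the plan can genuinely fail: nothing stops $k+1$ pairwise comparable elements from sitting strictly between $c_0$ and $c_{k+1}$, each incomparable to exactly $c_1,\dots,c_k$, so that $k+1$ mutually comparable vertices compete for a list of size $k$ and no system of distinct representatives exists. (Such a configuration is consistent with "no infinite intervals" and "no infinite antichains"; it only shows that \emph{this} $C$ was the wrong choice.) A correct proof must therefore either specify how to choose $C$ so that such overloads never occur, or abandon the "fix $C$ first, then colour" architecture altogether — and your proposal supplies neither. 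Until the short-list bookkeeping is actually carried out, this is an outline of a strategy, not a proof.
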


The purpose of this note is to prove the following result. A graph is \emph{locally finite} if every vertex is adjacent to finitely many vertices.

\begin{theorem}\label{thm:locallyfinite} If $P$ is an ordered set such that its incomparability graph is locally finite, then there is a partition of $P$ into antichains and there is a chain of $P$ that intersects every member of the partition.
\end{theorem}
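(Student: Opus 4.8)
The plan is to recast the conclusion as the existence of a single function on $P$, prove the analogous statement for finite posets by hand, and then obtain the general case by a compactness argument over a product of finite sets; local finiteness of the incomparability graph enters only to make this product compact. Write $N(x):=\{y\in P: y\text{ is incomparable to }x\}$, which is finite for every $x$ by hypothesis. Call a function $f:P\to P$ \emph{good} if (i) $f(x)\in\{x\}\cup N(x)$ for all $x$; (ii) $f\circ f=f$; (iii) $f(x)=f(y)$ with $x\neq y$ forces $x$ and $y$ to be incomparable; and (iv) $\mathrm{Fix}(f):=\{z\in P:f(z)=z\}$ is a chain. If $f$ is good, then the nonempty fibers $f^{-1}(d)$, $d\in\mathrm{Fix}(f)=\mathrm{range}(f)$, partition $P$, each is an antichain by (i)--(iii), and $\mathrm{Fix}(f)$ is a chain meeting every part since $d\in f^{-1}(d)$. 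So it suffices to produce a good function.

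First I would show every finite poset $Q$ admits a good function. Let $\ell(x)$ be the number of elements of a longest chain of $Q$ with greatest element $x$, let $h$ be the maximum value of $\ell$, and set $A_i:=\{x\in Q:\ell(x)=i\}$; then $A_1,\dots,A_h$ partitions $Q$ into antichains (the Mirsky decomposition). Choose a longest chain $z_1<z_2<\dots<z_h$ of $Q$: if some chain with top $z_i$ had more than $i$ elements it could be prolonged by $z_{i+1}<\dots<z_h$ past the bound $h$, so $\ell(z_i)=i$ and $z_i\in A_i$. Then $f_0(x):=z_{\ell(x)}$ is idempotent, has $f_0^{-1}(z_i)=A_i$, and $\mathrm{Fix}(f_0)=\{z_1,\dots,z_h\}$, a chain; hence $f_0$ is good.

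Now I would run the compactness argument. Endow $X:=\prod_{x\in P}\bigl(\{x\}\cup N(x)\bigr)$ with the product topology; each factor is finite, so $X$ is compact. Conditions (ii), (iii), (iv) each cut out an intersection of clopen cylinder sets, hence a closed subset of $X$, so the set of good functions is closed, and by compactness it is nonempty once it has the finite intersection property. Given finitely many of these clopen constraints, let $E\subseteq P$ be the finite set of elements occurring as their indices; pick a good function $f_0$ on the induced subposet $P|_E$ by the previous paragraph, and extend it to $P$ by $f(x):=x$ for $x\notin E$. Then $f\in X$, and $f$ meets each chosen constraint: for an idempotency constraint at $x\in E$ because $f_0(x)\in E$ and $f_0$ is idempotent on $E$; for a separation constraint at comparable $x,y\in E$ because $f_0$ already separates them; and for a chain constraint at incomparable $d,d'\in E$ because $\mathrm{Fix}(f_0)$ is a chain, so $d,d'$ are not both fixed by $f_0$, hence not both fixed by $f$. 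Padding with the identity outside $E$ is harmless precisely because the identity is idempotent and introduces neither a collision of fibers nor a pair of incomparable fixed points among the elements of $E$. Thus a good function exists, which proves the theorem.

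The step that needs the most care is the verification in the last paragraph: that solving the problem on the finite induced subposet $P|_E$ and then padding with the identity genuinely satisfies all finitely many constraints simultaneously. Everything else is routine bookkeeping, and the hypothesis is used exactly once, to guarantee that the ambient product space is compact.
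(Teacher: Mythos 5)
Your argument is correct, but it takes a genuinely different route from the paper. The paper proceeds structurally: it first reduces to the case where $\incg(P)$ is connected (the connected components form a linear sum, Lemmas \ref{lem:folklore} and \ref{lem:incomp-not-con}), then shows that a connected component of a locally finite incomparability graph cannot contain a copy of $\omega+1$ or of its dual (Lemma \ref{lem:omeg+1}), so all chains of a component embed into $\ZZ$ and all its intervals are finite; the conclusion is then read off from the Duffus--Goddard theorem (Theorem \ref{thm:conj-finite-int}). You instead give a self-contained compactness proof: you encode a solution as an idempotent map $f$ onto a chain of fixed points whose fibers are antichains, observe that $f(x)$ is forced into the finite set $\{x\}\cup N(x)$ so that the solution space sits inside a compact product, solve the finite case via the Mirsky decomposition together with a longest chain, and verify the finite intersection property by padding a finite solution with the identity. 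The steps check out: each constraint is clopen (for idempotency at $x$ it is a finite union of cylinders over $y\in\{x\}\cup N(x)$, still clopen), the identity-padded extension lies in the product and satisfies every selected constraint for the reasons you give, and a good function really does yield the required partition and chain, since any antichain through a point $d$ of the chain is automatically contained in $\{d\}\cup N(d)$. The trade-off: the paper's proof is shorter but uses Theorem \ref{thm:conj-finite-int} as a black box and yields structural information (each component has all chains embeddable in $\ZZ$ and all intervals finite); yours is elementary and self-contained, reducing everything to the finite Mirsky/longest-chain fact, very much in the spirit of the compactness argument Aharoni and Korman used for well-founded posets with finite levels.
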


The class of posets with locally finite incomparability graph contains the class of $(3 + 1)$-free  posets with no infinite antichains. Hence,

\begin{corollary}\label{cor:3+1} If $P$ is a  $(3 + 1)$-free  poset with no infinite antichains, then there is a partition of $P$ into antichains and there is a chain of $P$ that intersects every member of the partition.
\end{corollary}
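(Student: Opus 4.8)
Corollary~\ref{cor:3+1} is immediate from Theorem~\ref{thm:locallyfinite} together with the observation that a $(3+1)$-free poset $P$ with no infinite antichain has locally finite incomparability graph: if some $x\in P$ were incomparable to infinitely many elements, then, since those elements contain no infinite antichain, Ramsey's theorem would produce an infinite chain among them, and any three of its members together with $x$ would span an induced copy of $3+1$. So the real task is Theorem~\ref{thm:locallyfinite}, and the plan is to peel off the connected components of the incomparability graph $\mathrm{Inc}(P)$ and then invoke Theorem~\ref{thm:conj-finite-int}. Two preliminary remarks: first, an antichain of $P$ is a clique of $\mathrm{Inc}(P)$, so local finiteness already forbids infinite antichains; second, two elements lying in distinct components of $\mathrm{Inc}(P)$ are comparable (an incomparable pair is an edge), and a short path-chasing argument upgrades this to coherence: for distinct components $C,D$ either every element of $C$ lies below every element of $D$, or conversely. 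Hence the components are linearly ordered and $P=\bigoplus_{i\in L}Q_i$ is the ordered sum of its components, each $Q_i$ inheriting a connected, locally finite incomparability graph.

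This ordered-sum decomposition reduces the theorem to the case where $\mathrm{Inc}(P)$ is connected. Indeed, if for each $i$ we have an antichain partition $\mathcal{A}_i$ of $Q_i$ and a chain $D_i$ of $Q_i$ meeting every member of $\mathcal{A}_i$, then $\bigcup_i\mathcal{A}_i$ is an antichain partition of $P$ --- each part is an antichain of some $Q_i$ and hence of $P$, and every incomparable pair of $P$ lies inside a single component, so the parts indeed partition $P$ --- while $\bigcup_iD_i$ is a chain of $P$, because elements from different summands are comparable in accordance with the linear order on $L$, and it meets every member of $\bigcup_i\mathcal{A}_i$.

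The crux is then the claim: \emph{if $\mathrm{Inc}(P)$ is connected and locally finite then $P$ has no infinite interval.} Suppose not, and let $[a,b]=\{z:a\le z\le b\}$ be infinite. Since $P$ has no infinite antichain, $[a,b]$ contains an infinite chain, hence a subchain of order type $\omega$ or $\omega^{*}$; replacing $P$ by its dual if necessary (this leaves $\mathrm{Inc}(P)$ unchanged) we obtain $c_1<c_2<\cdots$ in $[a,b]$, and after deleting at most one term, $a<c_i<b$ for every $i$. Fix any $z\in P$. By local finiteness $z$ is comparable to $c_i$ for all but finitely many $i$, and among those indices $\{i:c_i<z\}$ and $\{i:c_i>z\}$ form an initial and a final segment; hence either $z>c_i$ for all large $i$ (say $z$ is \emph{high}) or $z<c_i$ for all large $i$ (say $z$ is \emph{low}). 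Comparing a high $z$ with a low $z'$ against a $c_i$ of large enough index gives $z'<c_i<z$, so every high element dominates every low element, and $P=L\oplus H$ where $L,H$ are the sets of low and high elements. But $a\in L$ and $b\in H$, so both are nonempty, and then $\mathrm{Inc}(P)$ is the disjoint union of $\mathrm{Inc}(L)$ and $\mathrm{Inc}(H)$, contradicting connectedness.

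Granting this, each component $Q_i$ has no infinite interval and (by the first paragraph) no infinite antichain, so Theorem~\ref{thm:conj-finite-int} furnishes the antichain partition $\mathcal{A}_i$ and transversal chain $D_i$; assembling these as in the second paragraph proves Theorem~\ref{thm:locallyfinite}, and hence Corollary~\ref{cor:3+1}. I expect the one genuinely delicate point to be the italicised claim --- specifically the high/low dichotomy and the recognition that connectedness of $\mathrm{Inc}(P)$ is exactly what obstructs the ordered-sum splitting $L\oplus H$ --- whereas the component decomposition, the assembly step, and the appeal to Theorem~\ref{thm:conj-finite-int} should be routine.
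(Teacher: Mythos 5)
Your proposal is correct and follows essentially the same route as the paper: reduce to the connected components of $\incg(P)$ via the ordered-sum decomposition, show that connectedness together with local finiteness rules out infinite intervals, and invoke Theorem~\ref{thm:conj-finite-int}. Your high/low dichotomy is in substance the paper's Lemma~\ref{lem:omeg+1} (a bounded infinite ascending chain forces a nontrivial splitting $L\oplus H$ that disconnects $\incg(P)$), and your Ramsey argument correctly supplies the containment of $(3+1)$-free posets without infinite antichains in the locally finite class, which the paper asserts without proof.
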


The class of  the $(3 + 1)$-free  posets is well studied (see \cite{Gasharov, Guay-Paquet, Skandera, SkanderaReed,zaguia}). These posets play a central role in the $(3 +1)$-free conjecture of Stanley and Stembridge \cite{Stanley-Stembridge}. Among the class of  $(3 + 1)$-free  posets lays the well known class of semiorders.\\

%A poset $P$ is an \emph{interval order} if $P$ is isomorphic to a subset $\mathcal J$ of the set $Int(C)$ of
%non-empty intervals of a chain $C$, ordered as follows: if $I, J\in Int(C)$, then
%
%\begin{equation}
%I<J \mbox{  if  } x<y  \mbox{  for every  } x\in I  \mbox{  and
%every  } y\in J.
%\end{equation}
%
%If all elements of the set $Int(C)$ have the same length, then $P$ is a \emph{semiorder}.\\

Throughout, we will refer to the instance $k=1$ of Conjecture 1 as the Aharoni--Korman conjecture.
%\begin{lemma}Let $P$ be an ordered set. The following propositions are equivalent.
%\begin{enumerate}[(i)]
%    \item $P$ is a semiorder.
%    \item $P$ does not embed $2+2$ and $3+1$.
%    \end{enumerate}
%\end{lemma}

\section{Prerequisites}
Throughout, $P :=(X, \leq)$ denotes a partially ordered set, poset for short. For $x,y\in V$ we say that $x$ and $y$ are \emph{comparable} if $x\leq y$ or $y\leq x$; otherwise we say that $x$ and $y$ are \emph{incomparable}. A set of pairwise incomparable elements is called an \emph{antichain}. A \emph{chain} is a totally ordered set. The \emph{comparability graph}, respectively the \emph{incomparability graph}, of $P$ is the graph, denoted by $Comp(P)$, respectively $Inc(P)$, with vertex set $X$ and edges the pairs $\{u,v\}$ of comparable distinct vertices (that is, either $u< v$ or $v<u$) respectively incomparable vertices.

Let $I$ be a poset such that $|I|\geq 2$ and let $\{P_{i}:=(V_i,\leq_i)\}_{i\in I}$ be a family of pairwise disjoint nonempty posets
that are all disjoint from $I$. The \emph{lexicographical sum} $\displaystyle \sum_{i\in I} P_{i}$ is the poset defined on
$\displaystyle \bigcup_{i\in I} V_{i}$ by $x\leq y$ if and only if
\begin{enumerate}[(a)]
\item There exists $i\in I$ such that $x,y\in V_{i}$ and $x\leq_i y$ in $P_{i}$; or
\item There are distinct elements $i,j\in I$ such that $i<j$ in $I$,   $x\in V_{i}$ and $y\in V_{j}$.
\end{enumerate}

%The posets $P_{i}$ are called the \emph{components} of the lexicographical sum and the poset $I$ is the \emph{index set}.
If $I$ is a totally ordered set, then $\displaystyle \sum_{i\in I} P_{i}$ is called a \emph{linear sum}. %At times, we will denote by $\plus$ the operation linear sum. On the other hand, if $I$ is an antichain, then $\displaystyle \sum_{i\in I} P_{i}$ is called a \emph{disjoint sum}  and will be denoted by $\bigoplus_{i\in I}P_i$ instead.

\section{Proof of Theorem \ref{thm:locallyfinite}}
The decomposition of the incomparability graph of a poset into connected components is expressed in the following lemma which belongs
to the folklore of the theory of ordered sets.

\begin{lemma}\label{lem:folklore} If $P:= (V, \leq)$ is a poset, the order on $P$ induces a total order on the set $Connect(P)$
of connected components of $\incg(P)$ and $P$ is the lexicographical sum of these components indexed by the chain $Connect(P)$. In
particular, if $\preceq$ is a total order extending the order $\leq$ of $P$, each connected component $A$ of $\incg(P)$ is an interval of
the chain $(V, \preceq)$.
\end{lemma}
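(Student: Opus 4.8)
The plan is to reduce everything to one structural claim: for any two \emph{distinct} connected components $A \ne B$ of $\incg(P)$, either every element of $A$ lies below every element of $B$ in $P$, or every element of $A$ lies above every element of $B$. Granting this, I define a relation on $\Connect(P)$ by setting $A \prec B$ iff each element of $A$ is $\leq$ each element of $B$. The claim says that $\prec$ is total (for $A\ne B$ exactly one of $A\prec B$, $B\prec A$ holds, since the components are nonempty and pairwise disjoint), and transitivity is immediate by chasing a single element of the middle component; so $\prec$ is a total order on $\Connect(P)$, and it is visibly the order induced by $\leq$. The description of $P$ as the lexicographical sum $\sum_{A\in\Connect(P)} A$, each summand carrying the restriction of $\leq$, then follows by matching the two cases in the definition of lexicographical sum against the dichotomy ``$x,y$ in the same component'' versus ``$x,y$ in different components'': two comparable elements lying in distinct components are precisely case (b), and two elements of the same component are case (a) because the order of the component is by definition the restriction of $\leq$.

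To prove the structural claim, first note that if $x\in A$ and $y\in B$ with $A\ne B$, then $x$ and $y$ cannot be incomparable (otherwise the edge $\{x,y\}$ would put them in the same component of $\incg(P)$), hence they are comparable. The key intermediate step is a \emph{propagation} observation: if $u,v$ are incomparable and $w$ is comparable to both with $u<w$, then $v<w$ --- because $w<v$ together with $u<w$ would force $u<v$, contradicting incomparability of $u$ and $v$. Now fix $y\in B$ and an arbitrary $x'\in A$, take a path $x=x_0, x_1,\dots,x_n=x'$ in the component $A$ (consecutive vertices incomparable, each comparable to $y$ since they lie in $A\ne B$), and apply the propagation step along the path: $x<y$ propagates to $x'<y$. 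Then run the symmetric argument along a path inside $B$ to upgrade this to $x'<y'$ for all $x'\in A$ and all $y'\in B$. Since $x$ and $y$ were comparable, one of the two alternatives $x<y$ or $y<x$ held at the start, which yields the claimed dichotomy.

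For the ``in particular'' clause, let $\preceq$ be any total order extending $\leq$, and suppose toward a contradiction that $x\preceq z\preceq y$ with $x,y$ in a component $A$ but $z$ in a component $B\ne A$. By the claim, either all of $A$ is below all of $B$, in which case $y<z$, hence $y\prec z$, contradicting $z\preceq y$; or all of $B$ is below all of $A$, in which case $z<x$, hence $z\prec x$, contradicting $x\preceq z$. Therefore $z\in A$, i.e.\ $A$ is an interval of $(V,\preceq)$.

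The only genuinely delicate part is the propagation step and its iteration along paths in a component; everything after that is routine verification of the lexicographical-sum and total-order axioms. I would also dispatch the degenerate case in which $\incg(P)$ has a single connected component: then $\Connect(P)$ is a one-element chain, the lexicographical sum is not formally defined, but the conclusion (including that the unique component $A=V$ is an interval of every linear extension) holds trivially.
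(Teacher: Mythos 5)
Your proof is correct. The paper itself offers no proof of this lemma --- it is stated as folklore --- so there is nothing to compare against, but your argument is exactly the standard one: the propagation step (two incomparable elements have the same order relation to any element comparable to both) iterated along paths within a component yields the dichotomy between distinct components, and the rest is routine verification. Your handling of the degenerate single-component case and of the ``in particular'' clause is also sound.
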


The proof of the following lemma is easy and is left to the reader.

\begin{lemma}\label{lem:incomp-not-con}Let $P=\sum_{i\in I}P_i$ be a linear sum. Let $\mathcal{A}_i$ be a partition of $P_i$ into antichains and let $\mathcal{C}_i$ be a chain of $P_i$ so that $\mathcal{C}_i$ meets each part of $\mathcal{A}_i$. Then $\bigcup_{i\in I}\mathcal{A}_i$ is a partition of $P$ into antichains and $\sum_{i\in I}\mathcal{C}_i$ is a chain of $P$ that meets every part of $\bigcup_{i\in I}\mathcal{A}_i$.
\end{lemma}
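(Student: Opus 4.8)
The plan is to verify the three assertions in turn, each of which reduces to unwinding the definition of the lexicographical (here linear) sum. The single fact underlying everything is that the order of $P=\sum_{i\in I}P_i$, restricted to a single block $V_i$, coincides with $\leq_i$ --- this is clause (a) of the definition, the only clause that can relate two elements of the same block --- while any two elements lying in distinct blocks $V_i,V_j$ are comparable in $P$ by clause (b), using that $I$ is totally ordered. I would record this observation first, since both the antichain claim and the chain claim follow from it immediately.

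For the partition claim: the blocks $V_i$ are pairwise disjoint, so the families $\mathcal{A}_i$ consist of pairwise disjoint sets; since each $\mathcal{A}_i$ is a partition of $V_i$, the union $\bigcup_{i\in I}\mathcal{A}_i$ is a family of pairwise disjoint nonempty sets whose union is $\bigcup_{i\in I}V_i=P$, i.e.\ a partition of $P$. Each part $A\in\mathcal{A}_i$ is an antichain of $P_i$, and because the order $P$ induces on $V_i$ is exactly $\leq_i$, the set $A$ is still an antichain in $P$. Note that no part straddles two blocks, so no cross-block incomparability ever has to be checked.

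For the chain claim: each $P_i$ is nonempty, so $\mathcal{A}_i$ has at least one part, and since $\mathcal{C}_i$ meets that part we get $\mathcal{C}_i\neq\emptyset$; hence $\sum_{i\in I}\mathcal{C}_i$ is a genuine linear sum. To see that $\sum_{i\in I}\mathcal{C}_i$ is a chain of $P$, take two of its elements $x,y$: if they lie in the same $\mathcal{C}_i$ they are comparable because $\mathcal{C}_i$ is a chain, and if they lie in different blocks they are comparable by clause (b); moreover the order $P$ induces on $\bigcup_{i\in I}\mathcal{C}_i$ agrees with the order of the sum $\sum_{i\in I}\mathcal{C}_i$, again by clauses (a) and (b). Finally, every part of $\bigcup_{i\in I}\mathcal{A}_i$ is a part of some single $\mathcal{A}_i$, which by hypothesis is met by $\mathcal{C}_i$, and $\mathcal{C}_i\subseteq\sum_{i\in I}\mathcal{C}_i$; hence the chain meets every part.

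I would close by noting that there is essentially no obstacle here: the whole content is the identification of the order $P$ induces on each block with $\leq_i$, together with the comparability of elements across blocks, both immediate from clauses (a) and (b). The only points requiring a moment's care are confirming that each $\mathcal{C}_i$ is nonempty (so that $\sum_{i\in I}\mathcal{C}_i$ is a well-defined sum) and that parts drawn from distinct $\mathcal{A}_i$ are automatically disjoint --- which is exactly where the disjointness of the blocks in the definition of the sum is used.
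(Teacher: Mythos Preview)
Your proof is correct; it is precisely the routine unpacking of the definition of a linear sum that the paper has in mind. In fact the paper does not supply a proof at all --- it states that the lemma ``is easy and is left to the reader'' --- so your verification is exactly the intended argument.
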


It follows easily from Lemmas \ref{lem:folklore} and \ref{lem:incomp-not-con} that if Aharoni-Korman conjecture is true for posets with no infinite antichains and whose incomparability graphs are connected, then it is true for all posets with no infinite antichains.

We consider the class of posets in which every element is incomparable to finitely many elements. That is posets $P$ such that $\incg(P)$ is locally finite.

The following lemma is well know.

\begin{lemma}Let $G=(V,E)$ be a graph which is connected and locally finite. Then $G$ is at most countable.
\end{lemma}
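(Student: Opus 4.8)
The plan is to exhaust $G$ by balls around a fixed basepoint. Fix a vertex $v_{0}\in V$. Since $G$ is connected, every vertex is joined to $v_{0}$ by a path of finite length, so the graph distance $d(v_{0},\cdot)$ is a well-defined function on $V$ with values in $\NN$. For each $n\in\NN$ set $B_{n}:=\{v\in V : d(v_{0},v)\leq n\}$, the ball of radius $n$ about $v_{0}$. Connectivity then gives $V=\bigcup_{n\in\NN}B_{n}$, since each vertex lies in $B_{n}$ for $n$ its distance from $v_{0}$.

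The key step is to prove by induction on $n$ that every $B_{n}$ is finite. The base case $B_{0}=\{v_{0}\}$ is immediate. For the inductive step, note that any vertex with $d(v_{0},v)=n+1$ is adjacent to some vertex with $d(v_{0},\cdot)=n$, whence $B_{n+1}\subseteq B_{n}\cup\bigcup_{v\in B_{n}}N(v)$, where $N(v)$ is the set of neighbours of $v$. By the inductive hypothesis $B_{n}$ is finite, and by local finiteness each $N(v)$ is finite, so $B_{n+1}$ is contained in a finite union of finite sets and is therefore finite.

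Putting the two observations together, $V=\bigcup_{n\in\NN}B_{n}$ is a countable union of finite sets and hence at most countable, as claimed. There is no genuine obstacle here; the only point requiring a moment's care is the appeal to connectedness to ensure that the balls $B_{n}$ actually exhaust $V$, and local finiteness is exactly what keeps each $B_{n}$ from being infinite in the inductive step.
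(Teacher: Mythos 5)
Your proof is correct and follows essentially the same route as the paper: decompose $V$ by distance from a fixed vertex (the paper uses spheres $V_n$, you use balls $B_n$, which is immaterial) and use local finiteness to show each piece is finite. Your version merely spells out the induction that the paper leaves implicit.
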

\begin{proof}Let $v\in V$ and consider the set $V_n$ of all vertices of $G$ that are at distance $n$ from $v$. Since G is connected, $V=\bigcup_{n\in \NN} V_n$. From $G$ locally finite we deduce that every $V_n$ is finite and hence $V$ is at most countable.
\end{proof}

Let $P=(V,\leq)$ be a poset and $X\subseteq V$. We denote by $\downarrow{X}:=\{v\in V: v\leq x \mbox{ for some }x\in X\}$.

\begin{lemma}\label{lem:omeg+1}Let $P=(V,\leq)$ be a poset such that $\incg(P)$ is locally finite. If $\omega+1$ embeds into $P$, then $\downarrow{\omega}$ is a nontrivial component of $\incg(P)$ and therefore $\incg(P)$ is not connected.
\end{lemma}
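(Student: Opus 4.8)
The plan is to unwind the hypothesis, describe the down-set $\downarrow\omega$ explicitly, and then use local finiteness to show it is cut off from the rest of $\incg(P)$. So fix an order embedding $f\colon\omega+1\hookrightarrow P$, put $a_n:=f(n)$ for $n<\omega$ and $b:=f(\omega)$, so that $a_0<a_1<a_2<\cdots$ in $P$ and $a_n<b$ for every $n$, and let $C:=\{a_n:n<\omega\}$ be the image of the initial $\omega$; thus $\downarrow\omega=\downarrow C=\{v\in V:\ v\le a_n\ \text{for some}\ n\}$. First I would record the easy structural facts. The set $\downarrow C$ is a nonempty down-set of $P$: it contains $a_0$, and it is downward closed by transitivity. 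It is moreover a \emph{proper} subset of $V$, since $b\notin\downarrow C$ --- otherwise $b\le a_n<b$ for some $n$. Finally, every $v\in\downarrow C$ satisfies $v\le a_n$ for all large enough $n$: if $v\le a_m$ then $v\le a_m\le a_n$ whenever $n\ge m$, so in particular $C$ is a cofinal chain of the down-set $\downarrow C$.

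The heart of the argument --- and the only place where local finiteness is used --- is the claim that \emph{no vertex of $V\setminus\downarrow C$ is adjacent in $\incg(P)$ to any vertex of $\downarrow C$}. Suppose toward a contradiction that $w\in V\setminus\downarrow C$ is incomparable to some $v\in\downarrow C$. Since $\incg(P)$ is locally finite, $w$ is incomparable to only finitely many elements of $P$, hence $w$ is comparable to $a_n$ for every $n$ beyond some $N$; and by the previous paragraph $v\le a_n$ for every $n$ beyond some $m$. Choose $n\ge\max(m,N)$ and split on the comparability of $w$ and $a_n$. If $w\le a_n$, then $w\in\downarrow C$ by definition of $\downarrow C$, contradicting $w\notin\downarrow C$. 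If instead $a_n\le w$, then $v\le a_n\le w$ gives $v\le w$, contradicting the incomparability of $v$ and $w$. Either branch is absurd, so the claim holds, and consequently $\downarrow\omega=\downarrow C$ is a union of connected components of $\incg(P)$.

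Putting the pieces together completes the proof: $\downarrow\omega$ is a nonempty proper subset of $V$ that is a union of connected components of $\incg(P)$ --- that is, a nontrivial such union --- so $\incg(P)$ has at least two connected components, for instance some component contained in $\downarrow\omega$ and the component containing $b$, and hence $\incg(P)$ is not connected. (If a single component is literally wanted one would in addition verify that $\downarrow\omega$ is $\incg(P)$-connected via incomparability paths; but for the stated conclusion only the ``nonempty proper union of components'' property is needed.)

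I expect the \textbf{main obstacle} to be merely phrasing the middle step cleanly: one has to observe that local finiteness forces each fixed vertex to be comparable to all but finitely many terms of the strictly increasing $\omega$-chain $C$, and then that these comparabilities must point ``the right way'' precisely because $C$ is cofinal in $\downarrow\omega$; everything else is routine transitivity bookkeeping with down-sets.
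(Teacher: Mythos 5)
Your proof is correct and takes essentially the same route as the paper: both use local finiteness to show that any vertex outside $\downarrow C$ is comparable to all but finitely many terms of the $\omega$-chain, and then use the down-set property to force those comparabilities upward, so no incomparability edge crosses out of $\downarrow C$. Your parenthetical caveat that one really obtains a nonempty proper \emph{union} of components (which suffices for disconnectedness) is a fair point that the paper glosses over.
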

\begin{proof}Let $C$ be a chain of order type $\omega+1$ in $P$ and let $c$ be the largest element of $C$. Let $I:=\downarrow{C\setminus\{c\}}$ and $F:=V\setminus I$. Then $c\in F$ and hence $F\neq \varnothing$. Let $x\in F$. Denote by $I_{\omega}(x)$ the set of elements of $C\setminus\{c\}$ incomparable to $x$. Then $I_{\omega}(x)$ is a final interval of $C\setminus\{c\}$ (the fact that it is an interval is easy, the fact that it is a final interval follows from $x\not \in I$ and $I$ is an initial segment of $P$). From our assumption that the incomparability graph of $P$ is locally finite and the fact that nonempty final intervals of $C\setminus\{c\}$ are infinite we deduce that $I_{\omega}(x)=\varnothing$. Hence, $x$ is above all elements of $C\setminus\{c\}$ (this is because $x\not \in I$) and therefore $x$ is above all elements of $I$. Thus every element of $F$ is above all elements of $I$. Since $\{I,F\}$ is a nontrivial partition of $V$ we deduce  that $I$ is a nontrivial component of $\incg(P)$ and hence  $\incg(P)$ is not connected.
\end{proof}

\begin{corollary}Let $P=(V,\leq)$ be a poset such that $\incg(P)$ is connected and locally finite. Then every chain of $P$ embeds into the chain $\ZZ$. %$P$ is a scattered poset. Furthermore, if $P$ does not embed $\ZZ$, then $P$ or $P^*$ is well founded.
\end{corollary}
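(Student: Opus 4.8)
The plan is to deduce the corollary from Lemma~\ref{lem:omeg+1} together with its order-dual, by way of a clean order-theoretic description of which chains embed into $\ZZ$. The key claim is: \emph{a linear order that does not embed into $\ZZ$ contains a subchain of order type $\omega+1$ or a subchain of order type $(\omega+1)^{*}$}. Assuming this, let $C$ be a chain of $P$ that does not embed into $\ZZ$. If $\omega+1$ embeds into $C$, hence into $P$, then Lemma~\ref{lem:omeg+1} makes $\incg(P)$ disconnected, contradicting the hypothesis. If instead $(\omega+1)^{*}$ embeds into $C$, then $\omega+1$ embeds into the dual poset $P^{*}$; since incomparability is unchanged when the order is reversed, $\incg(P^{*})$ and $\incg(P)$ are literally the same graph, so $\incg(P^{*})$ is connected and locally finite, and Lemma~\ref{lem:omeg+1} applied to $P^{*}$ again gives a contradiction. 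Thus every chain of $P$ embeds into $\ZZ$. (One could alternatively start by noting that, by the lemma preceding the corollary, $V$ and hence every chain of $P$ is at most countable, but this turns out not to be needed.)

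To establish the key claim I would argue by contraposition: assume $C$ contains no copy of $\omega+1$ and no copy of $(\omega+1)^{*}$, and show first that every closed interval $[a,b]:=\{x\in C:a\le x\le b\}$ is finite. Indeed, an infinite $[a,b]$ is an infinite linear order possessing both a greatest element $b$ and a least element $a$; if it contained a strictly increasing $\omega$-sequence, that sequence together with $b$ would be a copy of $\omega+1$, and otherwise $[a,b]^{*}$ would be an infinite well-order, hence contain a copy of $\omega$, so $[a,b]$ would contain a strictly decreasing $\omega$-sequence which together with $a$ is a copy of $(\omega+1)^{*}$; either outcome is excluded. Granting that all intervals of $C$ are finite, fix $a_{0}\in C$ (the case $C=\varnothing$ being trivial): finiteness of intervals forces each non-maximal element to have an immediate successor, so $\{x\in C:x\ge a_{0}\}$ is finite or of type $\omega$, and dually $\{x\in C:x\le a_{0}\}$ is finite or of type $\omega^{*}$; therefore $C$ has order type $n$, $\omega$, $\omega^{*}$ or $\ZZ$, each of which embeds into $\ZZ$.

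I do not anticipate a real obstacle. The only delicate point is the treatment of the dual case, where the efficient move is to observe that $\incg$ is invariant under reversing the order, so Lemma~\ref{lem:omeg+1} can be invoked verbatim for $P^{*}$ rather than its proof adapted. Apart from that, the only thing worth spelling out carefully is the folklore fact that a linear order all of whose closed intervals are finite has order type $n$, $\omega$, $\omega^{*}$, or $\ZZ$, since that is precisely what converts ``embeds into $\ZZ$'' into the combinatorial dichotomy exploited above.
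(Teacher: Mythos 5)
Your proposal is correct and follows essentially the same route as the paper: apply Lemma~\ref{lem:omeg+1} to exclude $\omega+1$, pass to the dual poset (whose incomparability graph is the same) to exclude $1+\omega^{*}=(\omega+1)^{*}$, and conclude. The only difference is that you carefully justify the final step --- that a chain omitting both $\omega+1$ and $(\omega+1)^{*}$ embeds into $\ZZ$ --- which the paper states without proof; your argument for it is sound.
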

\begin{proof}It follows from Lemma \ref{lem:omeg+1} that $P$ does not embed $\omega+1$. Since $P$ and its dual have the same incomparability graph we infer that $\omega+1$ does not embed in the dual of $P$, that is $1+\omega^*$ does not embed in $P$. Hence, every chain of $P$ embeds into the chain $\ZZ$.
\end{proof}

We are then left with the case where every interval of $P$ is finite. The conclusion follows from Theorem \ref{thm:conj-finite-int}.

%The fact that $P$ is scattered. Next we suppose that $P$ and $P^*$ are not well founded. Then $\omega$ and $\omega^*$  embed into $P$. Since $\inc(P)$ is connected we infer that $V\setminus \downarrow \omega=\varnothing$ and hence $\omega$ is cofinal in $P$. Then every element of $\omega^*$ is below some element of $\omega$. But then $P$ embeds $\omega^*+\omega$ which is isomorphic to $\ZZ$ as required.

%Since the Aharoni and Korman conjecture is invariant under duality and is true for well founded level finite posets we are left with the case of posets $P$ such that every maximal chain embeds into $\ZZ$ and $\inc(P)$ locally finite. It follows then that every interval of $P$ is finite. The conclusion follows from Theorem \ref{thm:conj-finite-int}.

%A chain is saturated if $C|cap [x,y]$ is a chain of maximal length in $[x,y]$...

\end{document}